\newcommand{\CM}{Cohen-Macaulay}
\newcommand{\wrt}{with respect to}
\newcommand{\n}{\mathfrak{n} }
\newcommand{\m}{\mathfrak{m} }
\newcommand{\M}{\mathfrak{M} }
\newcommand{\R}{\mathcal{R} }
\newcommand{\Z}{\mathbb{Z} }
\newcommand{\C}{\mathcal{C} }
\newcommand{\rt}{\rightarrow}
\newcommand{\ov}{\overline}
\newcommand{\image}{\operatorname{image}}
\newcommand{\Syz}{\operatorname{Syz}}
\newcommand{\Ass}{\operatorname{Ass}}
\newcommand{\Hom}{\operatorname{Hom}}
\newcommand{\sHom}{\operatorname{\underline{Hom}}}
\newcommand{\Ext}{\operatorname{Ext}}
\theoremstyle{plain}
\newtheorem{theorem}{Theorem}[section]
\newtheorem{corollary}[theorem]{Corollary}
\newtheorem{lemma}[theorem]{Lemma}
\theoremstyle{definition}
\newtheorem{remark}[theorem]{Remark}
\theoremstyle{remark}
\begin{document}

\title[Bockstein cohomology]{Bockstein cohomology of Maximal Cohen-Macaulay modules over Gorenstein isolated singularities}
\author{Tony~J.~Puthenpurakal}
\date{\today}
\address{Department of Mathematics, IIT Bombay, Powai, Mumbai 400 076}

\email{tputhen@math.iitb.ac.in}
\subjclass{Primary 13A30; Secondary 13D40, 13D07}
\keywords{Associated graded rings, Rees Algebras, Local cohomology, isolated singularities}

 \begin{abstract}
Let $(A,\mathfrak{m})$ be an excellent equi-charateristic Gorenstein isolated singularity of dimension $d \geq 2$. Assume the residue field of $A$ is perfect.
Let $I$ be any $\m$-primary ideal. Let $G_I(A) = \bigoplus_{n \geq 0}I^n/I^{n+1}$ be the associated graded ring of $A$ with respect to $I$ and let $\mathcal{R}_I(A) = \bigoplus_{n \in \mathbb{Z}}I^n$ be the extended Rees algebra of $A$ with respect to $I$. Let $M$ be a finitely generated $A$-module.
 Let $G_I(M) = \bigoplus_{n \geq 0}I^nM/I^{n+1}M$ be the associated graded ring of $M$ with respect to $I$ (considered as a $G_I(A)$-module). Let $BH^i(G_I(M))$ be the $i^{th}$-Bockstein cohomology  of $G_I(M)$ with respect to $\mathcal{R}_I(A)_+$-torsion functor.
 We show there exists $a \geq 1$ depending only on $A$
such that if $I$ is any $\m$-primary ideal with
$I \subseteq \m^a$ and  $G_I(A) $ generalized Cohen-Macaulay then the Bockstein cohomology $BH^i(G_I(M))$ has finite length for $i = 0, \ldots, d-1$ for any maximal Cohen-Macaulay $A$-module $M$.
\end{abstract}
 \maketitle
\section{introduction}
Let $(A,\m)$ be a \CM \ local ring of dimension $d$ and let $I$ be an $\m$-primary ideal. Let $G_I(A) = \bigoplus_{n \geq 0}I^n/I^{n+1}$ be the associated graded ring of $A$ with respect to $I$ and let $\R_I(A) = \bigoplus_{n \in \Z}I^n$ be the extended Rees algebra of $A$ with respect to $I$. Let $M$ be a \CM \ $A$-module.
 Let $G_I(M) = \bigoplus_{n \geq 0}I^nM/I^{n+1}M$ be the associated graded ring of $M$ with respect to $I$ (considered as a $G_I(A)$-module) and let $\R_I(M) = \bigoplus_{n \in \Z}I^nM$ be the extended Rees module of $M$ with respect to $I$.

 The \emph{Hilbert function} of $M$ \wrt \ $I$ is
$H^I(M,n) = \ell(I^nM/I^{n+1}M)$.  Here $\ell(-)$ denotes length as an $A$-module. A fruitful area of research has been to study the interplay between Hilbert functions and properties of
$G_I(M)$ and $\R_I(M)$.  See the texts
 \cite[Section 6]{VaSix} and  \cite[Chapter 5]{VasBook} for nice surveys on this subject (when $M = A$).
 Traditionally only the case $M = A$ was considered. However recently associated graded modules have been studied, see \cite{rv}.

Graded local cohomology  has played an important role in this subject.
 For  various applications
   see  \cite[4.4.3]{BH}, \cite{Durham}, \cite{VerJoh},
\cite{Blanc},  \cite{ItN}, \cite{Tr}
   and \cite{HMc}.
 Let $H^i(G_I(M))$ denote $i^{th}$-local cohomology module of $G_I(A)$ \wrt \ $G_I(A)_+ = \bigoplus_{n>0}I^n/I^{n+1}$. A line of inquiry in this subject is to find conditions on $I$ such that $G_I(A)$ (or $G_{I^n}(A)$ for all $n \gg 0$) has high depth. This is equivalent to showing that $H^i(G_I(A))$ (or $H^i(G_{I^n}(A))$ for all $n \gg 0$) vanishes for some $i < d$.

 We note that $t^{-1}$ is $\R_I(M)$-regular and $\R_I(M)/t^{-1}\R_I(M) = G_I(M)$. So we have naturally defined Bockstein operators
  $\beta^i \colon H^i(G_I(M))(-1) \rt H^{i+1}(G_I(M))$ for $i \geq 0$ (with respect to $\R_I(A)_+$-torsion functor). Since $\beta^{i+1}(+1)\circ \beta^i = 0$ we have \textit{Bockstein cohomology} modules $BH^i(G_I(M))$ for $i = 0,\ldots,d$. Despite being natural, Bockstein cohomology groups of associated graded rings have not been investigated much. In  \cite{PuB}  we studied some basic properties of Bockstein cohomology. We also showed that in some respects Bockstein cohomology behaves better than the usual local cohomology.

Maximal Cohen-Macaulay (MCM) modules encode a lot of information of the ring. In fact if $A$ is Gorenstein then the stable category of MCM $A$-modules is isomorphic to the singularity category of $A$, see \cite[4.4.1]{Buch}. A particularly important case is when $A$ is an isolated singularity (i.e., $A_P$ is regular for all primes $P \neq \m$). In this paper we take the view that information on $G_I(A)$ yields structural information on $G_I(M)$ when $M$ is MCM.

The main result of this paper is:
\begin{theorem}\label{main}
Let $(A,\mathfrak{m})$ be an excellent equi-characteristic \CM \ isolated singularity of dimension $d \geq 2$. Assume the residue field of $A$ is perfect. There exists $a \geq 1$ depending only on $A$
such that if $I$ is any $\m$-primary ideal with
$I \subseteq \m^a$ and  $H^i(G_I(A)) $ has finite length for $i < r$ then the Bockstein cohomology $BH^i(G_I(M))$ has finite length for $i < r$ for any MCM $A$-module $M$
such that $M = \Syz^A_1(L)$ where $L$ is a MCM $A$-module.
\end{theorem}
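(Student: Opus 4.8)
\medskip
\noindent\emph{Sketch of proof.}
The plan is to compare the associated graded module of $M$ with that of a free module by means of the presentation $0\to M\to F\to L\to 0$, $F$ free of rank $f$, to show that the discrepancy between $\R_I(M)$ and $\R_I(A)^f$ is killed by an $\m$-primary ideal attached to the singularity, and then to convert this control at the level of Rees modules into finiteness of Bockstein cohomology. First I would pass to the completion: $\wh{A}$ is again an excellent equicharacteristic \CM\ isolated singularity with perfect residue field, and neither $BH^i(G_I(-))$ nor the property of having finite length is affected; a further faithfully flat base change to $A(X)$ makes the residue field infinite if that is convenient.

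The crucial input --- this is where the hypotheses on $A$ enter --- is the existence of an $\m$-primary ideal $\aF$ of $A$, depending only on $A$, such that $\aF\cdot\operatorname{Tor}^A_i(X,Y)=0$ for all maximal Cohen--Macaulay $A$-modules $X,Y$ and all $i\ge 1$ (one may take $\aF$ to be the Jacobian ideal, or a cohomology-annihilator ideal of $A$; $\m$-primality of such an ideal is precisely the isolated-singularity condition, available here because $A$ is excellent equicharacteristic with perfect residue field). Fix $a\ge 1$ with $\m^a\subseteq\aF$. Assembling the sequences $0\to M/I^nM\to F/I^nF\to L/I^nL\to 0$, together with their $\operatorname{Tor}$ terms, over all $n$ yields exact sequences of finitely generated $\R_I(A)$-modules
\[
0\to\R_I(M)\to\R_I(A)^f\to\C\to 0,\qquad 0\to\mathcal{D}\to\C\to\R_I(L)\to 0,
\]
where $\C=\bigoplus_n I^nF/I^nM$ and $\mathcal{D}=\bigoplus_n\operatorname{Tor}^A_1(L,A/I^n)$ is the Artin--Rees defect of $M\subseteq F$. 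Because $L$ is MCM it is, over the Gorenstein (or \CM\ with canonical module) ring $A$, a $d$-fold co-syzygy of an MCM module, while $\Syz^A_d(A/I^n)$ is MCM; dimension shifting in both arguments identifies $\operatorname{Tor}^A_1(L,A/I^n)$ with $\operatorname{Tor}^A_1$ of two MCM modules, so $\aF\mathcal{D}=0$, i.e.\ $\aF(I^nF\cap M)\subseteq I^nM$ for all $n$ --- a uniform Artin--Rees estimate, uniform over $I$ and over $L$. Since $I\subseteq\m^a\subseteq\aF$ it follows that $\mathcal{D}$ is finitely generated over $\R_I(A)/\aF\R_I(A)$, a ring of dimension at most $\dim A$. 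The same applies to the defect modules attached to $L$ (itself $\Syz^A_1$ of an MCM module) and to $M$.

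Feeding the two exact sequences into the long exact sequence of local cohomology with respect to $\R_I(A)_+$ --- all maps being compatible with multiplication by $t^{-1}$ --- expresses $H^\bullet(\R_I(M))$, with its $t^{-1}$-action, in terms of $H^\bullet(\R_I(A)^f)$, $H^\bullet(\mathcal{D})$ and $H^\bullet(\R_I(L))$ (all taken with respect to $\R_I(A)_+$). On the other hand, the defining sequence $0\to\R_I(N)(1)\xrightarrow{\,t^{-1}\,}\R_I(N)\to G_I(N)\to 0$ gives, as in \cite{PuB}, an exact sequence (up to degree shifts)
\[
0\to H^i(\R_I(N))\big/\bigl((0:_{H^i(\R_I(N))}t^{-1})+t^{-1}H^i(\R_I(N))\bigr)\to BH^i(G_I(N))\to\bigl(0:_{H^{i+1}(\R_I(N))}t^{-1}\bigr)\cap t^{-1}H^{i+1}(\R_I(N))\to 0,
\]
so that $BH^i(G_I(N))$ has finite length as soon as the $t^{-1}$-cokernel of $H^i(\R_I(N))$ and the $t^{-1}$-kernel of $H^{i+1}(\R_I(N))$ do. For $N=A$ the hypothesis that $H^i(G_I(A))$ has finite length for $i<r$ yields exactly this for $i<r$. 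It then remains to bound the contributions of $\mathcal{D}$ and of $\R_I(L)$ in the range $i<r$; for $\R_I(L)$ one reruns the construction on $L$ (again $\Syz^A_1$ of an MCM module), each step producing only a further $\aF$-torsion defect plus a copy of $\R_I(A)$ handled by the hypothesis, so no circular appeal to the conclusion for $L$ is made; and one uses that, $L$ being MCM, $H^i(\R_I(L))$ is supported in a bounded range of degrees for $i<d$ because $H^i_\m(L)=0$ for $i<d$.

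The main obstacle, where I expect the bulk of the technical work, is precisely this last step: showing that the defect $\mathcal{D}$ --- finitely generated over $\R_I(A)/\aF\R_I(A)$ and built from $\operatorname{Tor}$'s of MCM modules --- contributes only finite length to $BH^i(G_I(M))$ for $i<r$, uniformly over all $\m$-primary $I\subseteq\m^a$ and all MCM $L$. Boundedness of $\dim\mathcal{D}$ by itself is not enough: one must use the uniform inclusion $\aF(I^nF\cap M)\subseteq I^nM$ to show that $t^{-1}$ is asymptotically surjective on $\mathcal{D}$, so that the $t^{-1}$-subquotients of $H^\bullet(\mathcal{D})$ entering the long exact sequences are of finite length, and one must check the degree-wise finiteness of $H^\bullet_{\R_I(A)_+}(\mathcal{D})$. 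Establishing that a single $a$, depending only on $A$, makes all of this work simultaneously is where the argument is genuinely delicate.
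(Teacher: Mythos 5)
Your proposal does not reach the theorem: the decisive mechanism is missing, and the route you sketch has structural problems beyond the ``technical work'' you explicitly defer. First, the recursion on $L$ is not available. The hypothesis is that $M=\Syz^A_1(L)$ with $L$ MCM, but $L$ itself is \emph{not} assumed to be a first syzygy of an MCM module, so you cannot ``rerun the construction on $L$''; and even when $A$ is Gorenstein (where every MCM module is such a syzygy) the recursion never terminates, so nothing is established about $H^\bullet(\R_I(L))$. Second, the contribution of the defect module $\mathcal{D}=\bigoplus_n\operatorname{Tor}^A_1(L,A/I^n)$ is exactly the point you leave open: knowing $\aF\mathcal{D}=0$ bounds $\dim\mathcal{D}$ but does not control the $t^{-1}$-kernels and cokernels of $H^\bullet_{\R_I(A)_+}(\mathcal{D})$ that enter your exact sequences, and you give no argument that actually uses $I\subseteq\m^a$ at this point. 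Third, the suggested base change to $A[X]_{\m A[X]}$ is explicitly ruled out in this setting: its residue field $k(X)$ is not perfect, and perfectness is what the uniform annihilator theorem (\cite[6.10]{Y}, \cite[15.14]{LW}) requires; the paper instead completes and passes to the direct limit of $A\otimes_k\ell$ over finite subextensions $\ell\subseteq\ov{k}$ of $k$.

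The paper uses the annihilator in a different and much more direct way. From $\m^a\sHom_A(M,M)=0$ (Lemma \ref{hom-ann}) one gets, for each $x\in I\subseteq\m^a$, a factorization of $\mu_x\colon M\to M$ through a free module $F$, hence of $\mu_{xt^0}$ on $\R_I(M)$ through $\R_I(F)$. The hypothesis that $H^i(G_I(A))$ has finite length for $i<r$ forces $H^i_\M(\R_I(A))$, hence $H^i_\M(\R_I(F))$, to have finite length for $i<r+1$ (Theorem \ref{prelim}(5)), so the map induced by $\mu_x$ on $H^i_\M(\R_I(M))_n$ is zero for $n\ll 0$. Choosing $x$ superficial so that $xt$ has finite-length cokernel on $H^i_\M(\R_I(M))$ (Theorem \ref{prelim}(4)) and writing $\mu_x=t^{-1}\circ(xt)$ on cohomology, one concludes that $t^{-1}\colon H^i_\M(\R_I(M))_{n+1}\to H^i_\M(\R_I(M))_n$ is zero for $n\ll0$ and $i<r+1$; a diagram chase in the long exact sequence linking $H^\bullet_\M(\R_I(M))$ and $H^\bullet_\M(G_I(M))$ then gives $BH^i(G_I(M))_n=0$ for $n\ll0$, which together with degreewise finiteness and vanishing in large degrees yields finite length. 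None of this requires the Tor-defect modules or any statement about $\R_I(L)$. To salvage your approach you would need an independent finiteness statement for the $t^{-1}$-action on $H^\bullet_{\R_I(A)_+}(\mathcal{D})$ and on $H^\bullet_{\R_I(A)_+}(\R_I(L))$ for an arbitrary MCM module $L$; the latter is essentially as hard as the theorem itself.
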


We note that if $A$ is Gorenstein then any MCM $A$-module is the syzygy of a MCM $A$-module. So we obtain as an easy corollary the result stated in the abstract.
\begin{corollary}\label{main-cor}
 Let $(A,\mathfrak{m})$ be an excellent equi-characteristic Gorenstein isolated singularity of dimension $d \geq 2$. Assume the residue field of $A$ is perfect. There exists $a \geq 1$ depending only on $A$
such that if $I$ is any $\m$-primary ideal with
$I \subseteq \m^a$ and  $G_I(A)$ generalized \CM \  then the Bockstein cohomology $BH^i(G_I(M))$ has finite length for $i < d$ for any MCM $A$-module $M$.
\end{corollary}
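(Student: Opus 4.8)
The plan is to obtain Corollary~\ref{main-cor} as a direct consequence of Theorem~\ref{main}, so that the work reduces to checking that the hypotheses of the theorem hold under those of the corollary. First, since a Gorenstein ring is \CM, the standing assumptions of Theorem~\ref{main} (excellent, equi-characteristic, \CM\ isolated singularity, perfect residue field, $d \geq 2$) are all inherited. So it remains to do two things: (a) rephrase ``$G_I(A)$ is generalized \CM'' as a finite-length statement about the modules $H^i(G_I(A))$, and (b) recall that over a Gorenstein ring every MCM module is a first syzygy of an MCM module, so that the conclusion of Theorem~\ref{main} is not vacuous for such an $M$.

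For (a): set $G = G_I(A)$ with graded maximal ideal $\M = \m/I \oplus G_+$ and note $\dim G = d$. Because $I$ is $\m$-primary, $G/G_+ = A/I$ is Artinian local with nilpotent maximal ideal, so the only prime of $G$ containing $G_+$ is $\M$; hence $\sqrt{G_+} = \M$, and the torsion functors $\Gamma_{G_+}$ and $\Gamma_{\M}$ --- and therefore all $H^i_{G_+}(-)$ and $H^i_{\M}(-)$ --- coincide on $G$-modules. By definition $G$ is generalized \CM\ exactly when $H^i_{\M}(G)$ has finite length for $i < d$, which by the above is the condition ``$H^i(G_I(A)) = H^i_{G_+}(G)$ has finite length for $i < d$''. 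Thus one may apply Theorem~\ref{main} with $r = d$ and the same $a$ depending only on $A$.

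For (b): given a MCM $A$-module $M$, use that $M^{\ast} = \Hom_A(M,A)$ is MCM, that $\Ext^j_A(M^{\ast},A) = 0$ for $j > 0$, and that $M \to M^{\ast\ast}$ is an isomorphism (all standard over a Gorenstein ring). Pick a finite free presentation $F_1 \to F_0 \to M^{\ast} \to 0$, let $N = \Syz^A_1(M^{\ast})$ (which is MCM, being a syzygy of a MCM module over a \CM\ ring), and dualize the short exact sequence $0 \to N \to F_0 \to M^{\ast} \to 0$; the $\Ext$-vanishing yields an exact sequence $0 \to M \cong M^{\ast\ast} \to F_0^{\ast} \to N^{\ast} \to 0$, exhibiting $M \cong \Syz^A_1(L)$ with $L = N^{\ast}$ MCM. (Equivalently, the syzygy functor is an autoequivalence of the stable category of MCM modules over a Gorenstein ring; cf.\ \cite{Buch}.)

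Combining (a) and (b): for $I \subseteq \m^a$ as above, Theorem~\ref{main} with $r = d$ gives that $BH^i(G_I(M))$ has finite length for $i < d$ for every MCM module $M$ of the form $\Syz^A_1(L)$ with $L$ MCM, and by (b) this is every MCM module. I do not expect a genuine obstacle here; the only points needing a moment's care are the identity $\sqrt{G_I(A)_+} = \M$ (which is what lets one read the generalized \CM\ hypothesis off the modules $H^i(G_I(A))$) and the syzygy representation of MCM modules over a Gorenstein ring, both of which are routine.
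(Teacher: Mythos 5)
Your proposal is correct and follows exactly the route the paper intends: the paper's entire proof of the corollary is the one-sentence remark that over a Gorenstein ring every MCM module is a first syzygy of an MCM module, so Theorem \ref{main} applies with $r = d$ (generalized \CM\ being equivalent to finite length of $H^i(G_I(A))$ for $i < d$ since $\sqrt{G_I(A)_+}$ is the maximal homogeneous ideal). Your write-up merely fills in the standard details of both points, so it is the same argument.
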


\s \emph{Techniques used to prove our result:}
It is elementary that we can reduce proof of Theorem \ref{main} to the case when $A$ is complete isolated singularity with infinite perfect residue field, see \ref{red}.  The main technique used is the notion of cohomological annhilators, i.e., there exists $a \geq 1$
such that $\m^a \Ext^1_A(X,Y) = 0$ for any MCM $A$-modules $X,Y$, see \cite[6.10]{Y} (also see \cite[15.14]{LW}). Let $\sHom_A(M, M)$ denote the stable Hom. If $M = \Syz^A_1(L)$ where $L$ is MCM $A$-module then $\m^a \sHom_A(M, M) = 0$. In particular if $I \subseteq \m^a$ then for any  $x \in I$,  the multiplication map $\mu_x \colon M \rt M$ factors through a free $A$-module. The assumptions on $G_I(A)$ also yield conditions on the local cohomology of $\R_I(A)$ with respect to its maximal homogeneous ideal. We then choose $x \in I$ sufficiently general to conclude.

We now describe in brief the contents of this paper. In section two we discuss some preliminaries on Bockstein cohomology that we need. In section three we discuss some preliminaries on excellent isolated Cohen-Macaulay local rings. We also show that we may reduce to the case when $A$ is complete with infinite perfect residue field. In section four we prove some results on the local cohomology of the extended Rees module with respect to the maximal homogeneous ideal of $\R_I(A)$. Finally in section five we prove Theorem \ref{main}.
\section{Bockstein Cohomology}
In this paper all rings are commutative Noetherian and all modules are assumed to be finitely generated unless specified otherwise.
In this section we first recall a very general construction of Bockstein cohomology. We then specialize to the case of associated graded modules.

\s \emph{General construction of  Bockstein Cohomology}.

 Let $R$ be a ring, $M$ an $R$-module and $x$ a regular element on $M$. We have a natural exact sequence
\[
0 \rightarrow \frac{M}{xM} \xrightarrow{\alpha} \frac{M}{x^2M} \xrightarrow{\pi} \frac{M}{xM} \rightarrow 0.
\]
Here $\pi$ is the natural projection map and $\alpha(m + xM) = xm + x^2M$. \\
Let $F \colon Mod(R) \rightarrow Mod(R)$ be any left exact functor. Then note that we have connecting homomorphisms
\[
\beta^i  \colon RF^i(M/xM) \rightarrow RF^{i+1}(M/xM).
\]
We call $\beta^i$ the $i^{th}$ \emph{Bockstein operator} on $M/xM$ with respect to $F$.

\s \label{short} Consider the natural exact sequence
\[
0 \rt M \xrightarrow{x} M \xrightarrow{\rho} M/xM \rt 0.
\]
So we have an exact sequence
\begin{equation*}
\rt RF^i(M/xM)\xrightarrow{\delta^i} RF^{i+1}(M) \rt RF^{i+1}(M) \xrightarrow{RF^{i+1}(\rho)} RF^{i+1}(M/xM) \rt \tag{$\dagger$}
\end{equation*}
It can be easily shown that $\beta^i =  RF^{i+1}(\rho)\circ \delta^i$.  Since $\delta^{i+1} \circ RF^{i+1}(\rho) = 0$ we get
that $\beta^{i+1} \circ \beta^{i} = 0$ for all $i \geq 0$. Thus we have a complex
\[
\cdots \xrightarrow{\beta^{i-1}} RF^i(M/xM) \xrightarrow{\beta^{i}} RF^{i+1}(M/xM) \xrightarrow{\beta^{i+1}} RF^{i+2}(M/xM) \cdots
\]
The cohomology of this complex is denoted by $BF^*(M/xM)$ and is called the \emph{Bockstein cohomology} of $M/xM$ with respect to $F$.

\s \textit{Bockstein Cohomology of Associated graded modules }

Let $\R_I(A) = \bigoplus_{n \in \mathbb{Z}}I^n$ be the extended Rees-ring of $A$ with respect to $I$.  Here $I^n = A$ for all $n \leq 0$ and $\R_I(A)$ is considered as a subring of $A[t,t^{-1}]$. Let $\R(I)_+$  be the ideal in $\R(I)$ generated by $ \bigoplus_{n >0}I^n$.
Let $M$ be an $A$-module. Let $\R_I(M) = \bigoplus_{n \in \mathbb{Z}}I^nM$ be the extended Rees-module of $M$ with respect to $I$.

Clearly $t^{-1}$ is a non-zero divisor on $\R_I(M)$. Note $\R_I(M)/t^{-1}\R_I(M) = G_I(M)$. We have an exact sequence (after a shift)
\[
0 \rightarrow G_I(M) \rightarrow \R_I(M)/t^{-2}\R_I(M)(-1) \rightarrow G_I(M)(-1) \rightarrow 0.
\]
Here
$$\frac{\R_I(M)}{t^{-2}\R_I(M)} = M/IM \oplus M/I^2M \oplus IM/I^3M\oplus I^2M/I^4M \oplus \cdots \oplus I^{n-1}M/I^{n+1}M \cdots,$$
with $M/IM$ sitting in degree $-1$.

\s Let $\Gamma_{\R_I(A)_+} \colon Mod(\R_I(A)) \rightarrow Mod(\R_I(A))$ be the $\R_I(A)_+$-torsion functor.  Set $G= G_I(A)$. So by the general theory we have Bockstein homomorphisms
\[
\beta^i \colon H^i_{G_+}(G_I(M))(-1) \rightarrow H^{i+1}_{G_+}(G_I(M)),
\]
and we have Bockstein cohomology modules
$$BH^i_{G_+}(G_I(M))  = \ker(\beta^{i}(+1))/\image(\beta^{i-1})   \quad \text{ for all $i \geq 0$}. $$
Set $\beta^i_I(M) = \beta^i(G_I(M))$.

\s \label{m-torsion} Assume $I$ is $\m$-primary. Let $\M$ be the maximal homogeneous ideal of $\R_I(A)$. We may consider the Bockstein cohomology modules with respect to $\M$-torsion functor. However we get the same maps since the natural map \\ $H^i_{\M}(E) \rt H^i_{\R_I(A)_+}(E)$ is an isomorphisms when
$E = G_I(A)$ or \\ $E = \R_I(M)/t^{-2}\R_I(M)(-1) $. However this is convenient as in \ref{short} (equation $(\dagger)$) we may work with $H^i_\M(\R_I(M))$ which is a $*$-Artinian $\R_I(A)$-module.
\begin{remark}\label{extn}
Let $(A,\m) \rt (A^\prime,\m^\prime)$ be a flat extension with $\m A^\prime = \m^\prime$. Set $I^\prime = IA^\prime$ and $M^\prime = M \otimes_A A^\prime$. Then it is clear that
$$\beta^i_{I^\prime}(M^\prime)  = \beta^i_I(M)\otimes_A A^\prime.$$
It follows that for all $i \geq 0$ we have
\[
BH^i_{G^\prime_+}(G_{I^\prime}(A^\prime)) \cong   BH^i_{G_+}(G_I(A))\otimes_A A^\prime.
\]
\end{remark}
\section{Some preliminaries on excellent isolated \CM \ local rings}
In our arguments we need to assume that $A$ is complete with infinite residue field (which is perfect).  We show how to reduce our Theorem to this case. We also give consequences of assuming $A$ is complete equi-characteristic \CM \  isolated singularity in terms of annhilators of $\Ext^1_A(X,Y)$ with $X, Y$ MCM $A$-modules.

\s \label{red}Let $A$ be an excellent equi-characteristic \CM \ isolated singularity. Assume the residue field of $A$ is perfect.
We wish to consider a  flat local extension $(A, \m) \rt (B, \n)$ with $\m B = \n$ such that $\m B = \n$, $B$ is complete, \CM \ isolated singularity with infinite perfect residue field.
Notice Bockstein cohomology behaves well under such extensions, see \ref{extn}.

Note $\widehat{A}$ is a  \CM \ isolated singularity containing a field isomorphic to $k = A/\m$.
If $k$ is infinite then we do  not have to do anything further and set $B = \widehat{A}$.

 If $k$ is finite the note that we \emph{cannot} do the standard technique of replacing $A$ with $B = A[X]_{\m A[X]}$ as the residue field of $B$ is $k(X)$ which is NOT perfect.
 So when $k$ is finite we do the following construction (from \cite[section 2]{PPG}): \\
 First complete $A$ (and so we assume $A$ is complete). Note $A$ will contain a field isomorphic to $k$. For convenience denote it by $k$ too. Fix an algebraic closure $\ov{k}$  of $k$. Let
 $$\C = \{ \ell \mid \ell \text{ is a finite extension of $k$ and contained in $\ov{k}$} \}.$$
 For $\ell \in \C$ let $A_\ell = A\otimes_k l$. Then $A_\ell$ is local with maximal ideal $\m_\ell = \m A_\ell$ and residue field $\ell$. Note clearly $\C$ is obviously a directed set (using inclusion). So we have a direct system of rings $\{ A_\ell \}_{\ell\in \C}$. Let $T$ be the direct limit. Then $T$ is a \CM \ local ring with maximal ideal $\m_T = \m T$ and residue field $\ov{k}$. If $A$ is Gorenstein then so is $T$, see \cite[3.4]{PPG}. Furthermore $T$ is excellent, see \cite[3.3]{PPG}. By \cite[10.7]{LW}, $T$ is also an isolated singularity. Set $B = \widehat{T}$.

 \s \label{ann} Assume $A$ is complete equicharacteristic \CM \ local ring which is an isolated singularity. Assume $k = A/\m$ is perfect. Then by \cite[6.10]{Y} (also see \cite[15.14]{LW})  it follows there exists $a \geq 1$
 such that $\m^a \Ext^1_A(X,Y) = 0$ for all maximal \CM \ $A$-modules.

 \s Let $M, N$ be $A$-modules. Let $\theta(M,N)$ be the $A$-submodule of  $\Hom_A(M, N)$ consisting of  all maps $f \colon M \rt N$ which factor through a free $A$-module. Set $\sHom_A(M, N) = \Hom_A(M,N)/\theta(M,N)$.

 With these preliminaries we have
 \begin{lemma}
\label{hom-ann}(with hypotheses as in \ref{ann}) Let $M, N$ be MCM $A$-modules. Assume $M = \Syz^A_1(L)$ where $L$ is a MCM $A$-module. Then $\m^a \sHom_A(M, N) = 0$.
 \end{lemma}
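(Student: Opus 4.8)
The plan is to read off $\sHom_A(M,N)$ as a homomorphic image of $\Ext^1_A(L,N)$ directly from the defining presentation of the syzygy. Since $M = \Syz^A_1(L)$, fix a short exact sequence
\[
0 \rt M \xrightarrow{\iota} F \rt L \rt 0
\]
with $F$ a (finitely generated) free $A$-module and $L$ an MCM $A$-module. Applying $\Hom_A(-,N)$ and using that $F$ is projective, so $\Ext^1_A(F,N) = 0$, produces an exact sequence
\[
\Hom_A(F,N) \xrightarrow{\iota^{*}} \Hom_A(M,N) \xrightarrow{\partial} \Ext^1_A(L,N) \rt 0 .
\]
Thus $\partial$ is surjective with $\ker \partial = \image(\iota^{*})$.

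Next I would observe that every element of $\image(\iota^{*})$ has the form $g \circ \iota$ for some $g \colon F \rt N$, and hence factors through the free module $F$; therefore $\ker \partial = \image(\iota^{*}) \subseteq \theta(M,N)$. Consequently $\partial$ factors through the quotient $\Hom_A(M,N)/\theta(M,N)$, yielding a surjection
\[
\Ext^1_A(L,N) \;\cong\; \frac{\Hom_A(M,N)}{\ker \partial} \;\twoheadrightarrow\; \frac{\Hom_A(M,N)}{\theta(M,N)} = \sHom_A(M,N).
\]

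Finally, since $L$ and $N$ are both MCM $A$-modules, \ref{ann} gives $\m^{a}\Ext^1_A(L,N) = 0$; as $\sHom_A(M,N)$ is a homomorphic image of $\Ext^1_A(L,N)$, we conclude $\m^{a}\sHom_A(M,N) = 0$, which is what we want. I do not expect a genuine obstacle here: the only point deserving care is the inclusion $\image(\iota^{*}) \subseteq \theta(M,N)$ — in general $\theta(M,N)$ can be strictly larger than the set of maps factoring through this particular $\iota$, but only the easy inclusion is used — and the hypothesis that $M$ is a first syzygy of an MCM module is exploited precisely so that the module $L$ above is MCM, making the cohomological annihilator $\m^{a}$ of \ref{ann} applicable to $\Ext^1_A(L,N)$.
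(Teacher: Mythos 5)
Your proposal is correct and is essentially identical to the paper's proof: both apply $\Hom_A(-,N)$ to a presentation $0 \rt M \rt F \rt L \rt 0$, identify $\Hom_A(M,N)/\image(\iota^*)$ with $\Ext^1_A(L,N)$, note that $\image(\iota^*) \subseteq \theta(M,N)$ so that $\sHom_A(M,N)$ is a quotient of $\Ext^1_A(L,N)$, and invoke the cohomological annihilator from \ref{ann}. Your write-up just makes explicit the inclusion $\image(\iota^*) \subseteq \theta(M,N)$, which the paper leaves implicit.
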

 \begin{proof}
   Consider an exact sequence $0 \rt M \rt F \rt L \rt 0$ where $F$ is a free $A$-module. So we have an exact sequence
   \[
   0 \rt \Hom_A(L, N) \rt  \Hom_A(F, N) \xrightarrow{\epsilon}  \Hom_A(M, N) \rt \Ext^1_A(L, N) \rt 0.
   \]
   It follows that $\Hom_A(M, N)/\image(\epsilon) \cong \Ext^1_A(L, N)$. The result follows as \\  $\sHom_A(M, N)$ is a quotient of $\Hom_A(M, N)/\image(\epsilon)$.
 \end{proof}
\section{Local cohomology of extended Rees module }
The aim of this section is to prove the following result which we need to prove Theorem \ref{main}.
\begin{theorem}\label{prelim}
Let $(A,\m)$ be complete, \CM \  local ring of dimension $d \geq 2$. Let $I$ be an $\m$-primary ideal in $A$. Let $\R_I(A)$ be the extended Rees-ring of $A$ \wrt\ $I$ and let $\M$ be its maximal homogeneous ideal. Let $N$ be a maximal \CM \ $A$-module. Then
\begin{enumerate}[\rm (1)]
  \item The natural map $H^i_\M(\R_I(N)) \rt H^i_{\R_I(A)_+}(\R_I(N))$ is an isomorphism for $i < d$.
  \item The natural map $H^d_\M(\R_I(N)) \rt H^d_{\R_I(A)_+}(\R_I(N))$ is an inclusion.
  \item For all $i \leq d$ we have $H^i_\M(\R_I(N))_n = 0$ for $n \gg 0$. Furthermore \\  $H^i_\M(\R_I(N))_n$ has finite length for all $n \in \Z$ and all $i \leq d$.
  \item If the residue field $k$ of $A$ is infinite then there exists $x\in I$ which is $N \oplus A$-superficial such that the map
      $H^i_\M(\R_I(N))(-1) \xrightarrow{xt} H^i_\M(\R_I(N))$ has finite length co-kernel (for all $i \leq d$).
  \item
   If $H^i_{G_I(A)_+}(G_I(N))$ has finite length for $i < r$ then $H^i_\M(\R_I(N))$ has finite length for $i < r +1$
\end{enumerate}
\end{theorem}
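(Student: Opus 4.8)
The plan is to base everything on two tools. The first is the factorization of torsion functors $\Gamma_\M=\Gamma_{\mathfrak b}\circ\Gamma_{\R_+}$, where $\R:=\R_I(A)$, $\M$ is its maximal homogeneous ideal and $\mathfrak b:=\m\R+t^{-1}\R$, so that $\M=\R_+ +\mathfrak b$ and torsion functors for a sum of ideals compose. The second is graded Matlis duality $(-)^\vee$ over $\R$, under which $*$-Artinian modules become finitely generated graded $\R$-modules (here one uses that $A$ is complete). Two preliminary remarks: since $N$ is maximal \CM\ of dimension $d\ge 2$, some $x\in I$ is a nonzerodivisor on $N$, whence $xt$ is a nonzerodivisor on $\R_I(N)$ and $\Gamma_{\R_+}(\R_I(N))=0$; and $H^i_\M(\R_I(N))$ is $*$-Artinian because $\R_I(N)$ is a finitely generated $\R$-module. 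For \emph{(1)} and \emph{(2)} I would run the Grothendieck spectral sequence
\[
E_2^{p,q}=H^p_{\mathfrak b}\bigl(H^q_{\R_+}(\R_I(N))\bigr)\ \Longrightarrow\ H^{p+q}_\M(\R_I(N)),
\]
whose collapse rests on the one statement in the whole argument that genuinely uses that $N$ is maximal \CM: \emph{$H^q_{\R_+}(\R_I(N))$ is supported only at $\M$ for $q<d$}. To prove this, invert $t^{-1}$: since $\R_{t^{-1}}=A[t,t^{-1}]$ and $\R_+\R_{t^{-1}}=IA[t,t^{-1}]$, one gets $H^q_{\R_+}(\R_I(N))_{t^{-1}}=H^q_I(N)\otimes_A A[t,t^{-1}]$, which is $0$ for $q<d$; so these modules are $t^{-1}$-torsion and vanish at every prime not containing $t^{-1}$. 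At a prime $P\in V(\R_+)$ with $t^{-1}\in P$ and $P\ne\M$, apply $H^\bullet_{\R_+}$ to $0\to\R_I(N)\xrightarrow{t^{-1}}\R_I(N)\to G_I(N)\to 0$ and use that $H^j_{\R_+}(G_I(N))=H^j_{G_I(A)_+}(G_I(N))$ is supported only at the maximal homogeneous ideal of $G_I(A)$ (because $V(G_I(A)_+)$ is a single point): this forces $t^{-1}$ to act invertibly on $H^q_{\R_+}(\R_I(N))_P$, which, being $t^{-1}$-torsion, is $0$. Hence $H^q_{\R_+}(\R_I(N))$ is $\M$-torsion, in particular $\mathfrak b$-torsion, for $q<d$. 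Then the spectral sequence degenerates — row $q=0$ vanishes, rows $1\le q<d$ contribute only $E_2^{0,q}=H^q_{\R_+}(\R_I(N))$, and the relevant differentials are zero — giving $H^n_\M(\R_I(N))\cong H^n_{\R_+}(\R_I(N))$ for $n<d$ and $H^d_\M(\R_I(N))\cong\Gamma_{\mathfrak b}\bigl(H^d_{\R_+}(\R_I(N))\bigr)\hookrightarrow H^d_{\R_+}(\R_I(N))$, the maps being the edge maps, which are the natural ones.

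For \emph{(3)} I would use (1)--(2) to extract more structure: for every $i\le d$, $W:=H^i_\M(\R_I(N))$ is both $\m\R$-torsion (it is supported at $\M$ for $i<d$, and $\Gamma_{\mathfrak b}(-)$ is $\m\R$-torsion for $i=d$ since $\m\R\subseteq\mathfrak b$) and $t^{-1}$-torsion (established for $i<d$ above, and $t^{-1}\R\subseteq\mathfrak b$ for $i=d$). Dualizing, $W^\vee$ is a finitely generated graded $\R$-module killed by $\m^k\R+t^{-k}\R$ for some $k$, hence a finitely generated module over $S:=\R/(\m^k+t^{-k})\R$. Every graded component of $S$ is a quotient of a finite-length $A/\m^k$-module, and $S$ vanishes in all degrees $\le -k$, so $W^\vee$ has finite-length graded pieces and is bounded below; therefore $W$ has finite-length graded pieces and vanishes in all sufficiently large degrees, which is (3).

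For \emph{(4)} note that the cokernel of $xt\colon H^i_\M(\R_I(N))(-1)\to H^i_\M(\R_I(N))$ is $W/xtW$, whose Matlis dual is $(0:_{W^\vee}xt)$; this has finite length precisely when $xt$ lies in no associated prime $P$ of $W^\vee$ with $\dim(\R/P)\ge 1$. By the $t^{-1}$-torsion of $W$ every associated prime of $W^\vee$ contains $t^{-1}$, hence corresponds to a prime $\overline P$ of $G_I(A)$, and $\dim(\R/P)\ge 1$ forces $\overline P\not\supseteq G_I(A)_+$; so $xt\in P$ amounts to the leading form of $x$ lying in the proper subspace $\overline P\cap G_I(A)_1$ of $G_I(A)_1$. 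Since $k$ is infinite, a sufficiently general $x\in I$ can be chosen at once $N\oplus A$-superficial and outside all these finitely many conditions (finitely many such $P$, finitely many $i\le d$); such an $x$ makes $xt$ a nonzerodivisor on $\R_I(N)$ with $W/xtW$ of finite length. The only delicate point is the bookkeeping that one general $x$ meets all the requirements simultaneously.

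For \emph{(5)}, assume $H^i_{G_I(A)_+}(G_I(N))$ has finite length for $i<r$ (so $r\le d$). For $i\le r$, the long exact sequence of $0\to\R_I(N)(1)\xrightarrow{t^{-1}}\R_I(N)\to G_I(N)\to 0$ exhibits $\ker\bigl(t^{-1}\colon W_i\to W_i\bigr)$ as a quotient of $H^{i-1}_{G_I(A)_+}(G_I(N))$, which has finite length since $i-1<r$; dually $W_i^\vee/t^{-1}W_i^\vee$ has finite length. But $W_i$ is $t^{-1}$-torsion (by (1)--(3), as $i\le d$), so $\operatorname{Supp}W_i^\vee\subseteq V(t^{-1})$, whence $\operatorname{Supp}(W_i^\vee/t^{-1}W_i^\vee)=\operatorname{Supp}W_i^\vee$; finite length of the former forces $\operatorname{Supp}W_i^\vee$ to consist only of maximal ideals, so $W_i^\vee$, and hence $W_i=H^i_\M(\R_I(N))$, has finite length. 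This proves (5). The one genuine obstacle in the whole argument is the localization step in (1)--(2) establishing that $H^q_{\R_+}(\R_I(N))$ is supported at $\M$; once that is secured, parts (3)--(5) are formal consequences of Matlis duality.
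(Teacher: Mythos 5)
Your parts (1)--(2) are correct and take a genuinely different (though morally equivalent) route from the paper: you run the composite-functor spectral sequence for $\Gamma_\M=\Gamma_{\mathfrak b}\circ\Gamma_{\R_+}$, whereas the paper uses the long exact sequence comparing $H^*_\M$ with $H^*_{\R_+}$ coming from $\sqrt{(t^{-1})+\R_I(A)_+}=\M$ together with the auxiliary module $L^I(N)=\bigoplus_{n\ge 0}N/I^{n+1}N$. Both arguments hinge on exactly the same input, namely that $H^q_{\R_+}(\R_I(N))_{t^{-1}}=H^q_I(N)\otimes_A A[t,t^{-1}]=0$ for $q<d$ because $N$ is maximal \CM; your localization detour through primes $P\neq\M$ is actually unnecessary, since $\sqrt{\R_I(A)_++(t^{-1})}=\M$ is a maximal ideal, so $t^{-1}$-torsion plus $\R_+$-torsion already forces $\M$-torsion.

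The genuine gap is in (3), and it is not cosmetic. You pass from ``$W=H^i_\M(\R_I(N))$ is $\m\R$-torsion and $t^{-1}$-torsion'' to ``$W^\vee$ is killed by $\m^k\R+t^{-k}\R$ for some $k$.'' For a module that is not finitely generated, torsion does not imply annihilation by a fixed power, and annihilators do not transfer to Matlis duals in this way: $E_A(k)$ is $\m$-torsion, yet its dual is $\widehat{A}$, which is killed by no power of $\m$ and has no associated prime containing $\m$. There is also a circularity lurking: the form of graded Matlis duality you invoke (that $W^\vee$ is a finitely generated graded $\R$-module with $W^{\vee\vee}=W$) already presupposes that the graded components of $W$ are Matlis-reflexive, essentially the finite-length statement you are trying to prove. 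The finiteness and eventual vanishing of the components of $H^i_{\R_+}(\R_I(N))$ is a substantive fact about extended Rees algebras --- the paper imports it from Blancafort's arguments [Blanc, 3.8 and 4.1] --- and your closing claim that (3)--(5) are ``formal consequences of Matlis duality'' is exactly where the proposal overreaches. The gap propagates to your (4) and (5) as written, since both use that $W^\vee$ is finitely generated, bounded below in degrees (hence killed by $t^{-a}$), and has all associated primes containing $t^{-1}$; granting (3), however, your (4) matches the paper's prime-avoidance lemma, and your (5) is a clean uniform argument that is arguably tidier than the paper's split into the cases $r\le d-1$ (via [Pu6, 5.2]) and $r=d$.
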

Before proving this theorem we need to discuss a few preliminaries that we need.
\s The following $\R_I(A)$-module (introduced in \cite{Pu5}) is very convenient to work with.
$$ L^I (N)= \bigoplus_{n \geq 0} \frac{N}{I^{n+1}N}.$$
To see that $L^I(M)$ is an $\R(I)$-module, note that we have an exact sequence
\[
0 \rt \R_I(N)\rt N[t,t^{-1}] \rt L^I(N)(-1) \rt 0.
\]
By this exact sequence we  give $L^I(N)$ a structure of $\R_I(A)$-module.
Note that $L^I(N)$ is \textit{not} finitely generated as a $\R_I(A)$-module.

\s \label{L} Let $N$ be a MCM $A$-module. Let $x_1, \ldots, x_d$ be an $A$-regular sequence. Then it a $N$-regular sequence. Notice $x_1t, \ldots, x_{d}t \in \R_I(A)_+$ is a $N[t,t^{-1}]$-regular sequence.Thus $H^i_{\R_I(A)_+}(N[t,t^{-1}]) = 0$ for $i < d$. It follows that $H^i_{\R_I(A)_+}(\R_I(N)) = H^{i-1}_{\R_I(A)_+}(L^I(N))(-1)$ for $i < d$.

\s \label{compare} In \cite[3.1]{PuB} we proved that the natural map $H^i_\M(L^I(N)) \rt H^i_{\R_I(A)_+}(L^I(N))$ is an isomorphism for all $i$.

The following lemma is useful.
\begin{lemma}\label{sup}
(with hypotheses as in \ref{prelim}) Assume the residue field of $k$ is infinite.
Let $E = \bigoplus_{n \in \Z} E_n$ be a finitely generated $\R_I(A)$-module with $E_n = 0$ for $n \ll 0$. Then $\ell_A(E_n)$ is finite for all $n \in \Z$. Furthemore there exists a non-empty Zariski open
 dense subset $U$ of $I/\m I$ such that if the image of $x \in I$ is in $U$ then the map $E(-1)\xrightarrow{xt} E$ has finite length kernel.
\end{lemma}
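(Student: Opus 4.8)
The plan is to prove the two assertions in turn: the finiteness of $\ell_A(E_n)$ by localizing $A$ away from $\m$, and the existence of the generic $x$ by exhibiting a suitable superficial element for $E$ over $\R_I(A)$.

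\emph{Finiteness of $\ell_A(E_n)$.} Since $\R_I(A)$ is Noetherian and $E$ is finitely generated, each $E_n$ is a finitely generated $A$-module, so it suffices to check that $(E_n)_{\mathfrak{q}} = 0$ for every prime $\mathfrak{q} \neq \m$. Fix such a $\mathfrak{q}$. Because $I$ is $\m$-primary we have $I \not\subseteq \mathfrak{q}$, hence $IA_{\mathfrak{q}} = A_{\mathfrak{q}}$, and therefore $\R_I(A) \otimes_A A_{\mathfrak{q}} \cong A_{\mathfrak{q}}[t, t^{-1}]$ (the extended Rees ring of the unit ideal). Thus $E \otimes_A A_{\mathfrak{q}}$ is a finitely generated $\Z$-graded $A_{\mathfrak{q}}[t,t^{-1}]$-module whose $n$-th component is $(E_n)_{\mathfrak{q}}$; since $t$ is now a homogeneous unit, multiplication by $t$ identifies all of its graded components, and as they vanish for $n \ll 0$ they vanish for every $n$. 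Hence $(E_n)_{\mathfrak{q}} = 0$, so $E_n$ is supported only at $\m$ and has finite length.

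\emph{The generic element.} Put $N = \Gamma_{\R_I(A)_+}(E)$. As a submodule of the Noetherian module $E$ it is finitely generated, hence annihilated by $(\R_I(A)_+)^c$ for some $c \geq 1$; since $(\R_I(A)_+)^c$ contains $\R_I(A)_n$ for all $n \geq c$, this forces $N_n = 0$ for $n \gg 0$, while $N_n = 0$ for $n \ll 0$ because $N \subseteq E$. Combined with the previous paragraph, $N$ has finite length. Set $\overline{E} = E/N$. Applying $\Gamma_{\R_I(A)_+}$ to $0 \to N \to E \to \overline{E} \to 0$ and using $H^1_{\R_I(A)_+}(N) = 0$ (valid as $N$ is $\R_I(A)_+$-torsion) gives $\Gamma_{\R_I(A)_+}(\overline{E}) = 0$; hence no prime in $\Ass_{\R_I(A)}(\overline{E})$ contains $\R_I(A)_+$, and since $\R_I(A)_+$ is generated by $\R_I(A)_1 = It$, none of these finitely many primes $\mathfrak{p}$ contains $It$. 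For each such $\mathfrak{p}$ the set $W_{\mathfrak{p}} = \{x \in I : xt \in \mathfrak{p}\}$ is a proper $A$-submodule of $I$, so by Nakayama its image in $I/\m I$ is a proper $k$-subspace; as $k$ is infinite, the complement $U$ of the finite union of these subspaces is a non-empty Zariski open dense subset of $I/\m I$.

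\emph{Conclusion and the main difficulty.} If the image of $x \in I$ lies in $U$, then $xt$ is contained in no associated prime of $\overline{E}$, hence is a nonzerodivisor on $\overline{E}$; the exact sequence $0 \to N \to E \to \overline{E} \to 0$ then gives $(0 :_E xt) \subseteq N$, so $\ker\bigl(E(-1) \xrightarrow{\ xt\ } E\bigr) = (0 :_E xt)(-1)$ has finite length. I expect the real work to be the middle paragraph: one must first see that the $\R_I(A)_+$-torsion of $E$ has finite length — this is exactly where the hypothesis $E_n = 0$ for $n \ll 0$ is used, through the length statement — and then exploit that $\R_I(A)_+$ is generated in degree one, so that ``avoiding $\R_I(A)_+$'' is a Zariski-open condition on the image of $x$ in $I/\m I$. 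It is this last observation that upgrades a bare existence statement to the asserted dense open set $U$.
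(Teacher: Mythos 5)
Your proof is correct and follows essentially the same route as the paper: prime avoidance in $I/\m I$ over the infinite residue field, applied to the finitely many associated primes not containing $\R_I(A)_+ = It\,\R_I(A)$, after which the kernel of $xt$ is trapped in a piece of $E$ already known to have finite length. The only cosmetic difference is in the first assertion, where the paper observes $t^{-a}E=0$ for some $a$ (so $E$ is a module over $\R_I(A)/t^{-a}\R_I(A)$, whose graded components have finite length) instead of localizing at primes $\q \neq \m$; both arguments are fine.
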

\begin{proof}
Let $E$ be generated by homogeneous elements $e_1, \ldots, e_s$. Then there exists $a$ with $t^{-a}e_i = 0$ for all $i$. So $t^{-a}E = 0$. Thus $E$ is a $S = \R_I(A)/t^{-a}\R_I(A)$-module. As $\ell_A(S_n)$ is finite for all $n \in \Z$ it follows that $\ell_A(E_n) $ is finite for all $n \in \Z$.

Let $L = It \cong I$ and set $V = L/\m L \cong I/\m I$. If $P $ is a prime ideal in $\R_I(A)$ then set  $P_1 = P \cap L $ and $\ov{P_1} = (P_1  + \m L)/\m L$. If $P \nsupseteq \R_I(A)_+$
then $P_1 \neq L$ and so $\ov{P_1}$ is a proper subspace of $V$. As $k$ is infinite
\[
U = V \setminus \bigcup_{\stackrel{P \in \Ass(E)}{P \nsupseteq \R_I(A)_+}}\ov{P_1}  \neq \emptyset.
\]
  Let $xt \in U$. Let $K = \ker(E(-1) \xrightarrow{xt} E)$. We prove $\ell(K) < \infty$.

  Claim-1: If $Q \in \Ass(K)$ then $Q \supseteq \R(I)_+$. \\
  Suppose if possible $Q \nsupseteq \R(I)_+$. Note $Q \in \Ass(E)$. So $xt \notin Q$.
  We have an exact sequence
  $$ 0 \rt K_Q \rt E_Q \xrightarrow{xt} E_Q.$$
  As $xt \notin Q$, it is a unit in $\R_I(A)_Q$. It follows that $K_Q = 0$, a contradiction.

  By claim 1 it follows that if $w \in K$ then $(\R_I(A)_+)^mw = 0$ for some $m > 0$. Thus $K$ is $\R_I(A)_+$-torsion. Ass $\ell(K_n) < \infty $ for all $n$ it follows that $K$ is $(\m t^0)$-torsion. Also as $t^{-a}K = 0$ (as $t^{-a}E = 0$) it follows that $K$ is $t^{-1}$-torsion. So $K$ is $\M$-torsion. As $K$ is finitely generated it follows that $K$ has finite length.
  The result follows.
\end{proof}

We mow give
\begin{proof}[Proof of Theorem \ref{prelim}]
  As $I$ is $\m$-primary it follows that $\sqrt{(t^{-1}, \R_I(A)_+)} = \M$. We have a long exact sequence in cohomology
  \[
  H^{i-1}_{\R_I(A)}(\R_I(N))_{t^{-1}} \rt  H^{i}_{\M}(\R_I(N)) \rt  H^{i}_{\R_I(A)}(\R_I(N)) \rt  H^{i}_{\R_I(A)}(\R_I(N))_{t^{-1}}
  \]
  We note that for $i< d$ by \ref{L} we have an isomorphism  \\ $H^{i}_{\R_I(A)}(\R_I(N)) \cong H^{i-1}_{\R_I(A)}(L^I(N))$. But $L^I(N)_{t^{-1}} = 0$. Thus the assertions (1), (2) follow.

  (3) By an argument similar to  \cite[3.8]{Blanc} it follows that \\ $H^i_{\R_I(A)_+}(\R_I(N))_n = 0$ for all $n \gg 0$. Furthermore by an argument similar to  \cite[4.1]{Blanc}  it follows that $H^i_{\R_I(A)_+}(\R_I(N))_n$ has finite length for all $n \in \Z$. The result follows from (1), (2).

  (4) Let $(-)^\vee$ denote the Matlis dual functor. Take $E = G_I(M) \oplus G_I(A) \oplus (\oplus_{i \leq d}(H^i_\M(\R_I(M))^\vee$. Apply  Lemma \ref{sup} to conclude.

  (5) Note $r \leq d$. We first consider the case when $r \leq d -1$. Then by \cite[5.2]{Pu6},  $H^i_\M(L^I(N))$ has finite length for $i < r$. The result follows from \ref{L}, \ref{compare} and (1).
  Finally we consider the case when $r = d$. Set $E$ to be the Matlis dual of $H^d_\M(\R_I(N))$. Taking cohomology with respect to the exact sequence $0 \rt \R_I(N)(+1) \xrightarrow{t^{-1}} \R_I(N) \rt G_I(N) \rt 0$ and as $H^{d-1}_M(G_I(N))$ has finite length it follows that we have an exact sequence
  \[
  E(+1) \xrightarrow{t^{-1}} E \rt C \rt 0 \quad \text{with $\ell(C) < \infty$}.
  \]
  Let $P$ be homogeneous prime in the support of $E$. As $E$ is $t^{-1}$-torsion it follows that $t^{-1} \in P$. If $P \neq \M$ then localizing the above exact sequence at $P$ and applying Nakayama Lemma we get $E_P = 0$, a contradiction. So $P = \M$ and thus $E$ has finite length.
\end{proof}

\section{Proof of Theorem \ref{main}}
In this section we give:
\begin{proof}[Proof of Theorem \ref{main}]
We may assume that $A$ is complete with infinite residue field (see \ref{red}).
By \ref{hom-ann} there exists $a \geq 1$ such that   $\m^a\sHom_A(M, M) = 0$ where $M = \Syz^A_1(L)$ with $L$ an MCM $A$-module.

Let $I$ be an $\m$-primary ideal of $A$ with $I \subseteq \m^a$. Note we are assuming $H^i(G_I(A))$ have finite length for $i < r$. Note we have nothing to prove if $r \leq 1$. So assume that $t \geq 2$.
By \ref{prelim} we have that $H^i_\M(\R_I(A))$ has finite length for $i < r + 1$.

Let $M = \Syz^A_1(L)$ with $L$ an MCM $A$-module. By \ref{prelim} we may choose $x \in I$-superficial \wrt \ $M$ such that the map
$$H^i_\M(\R_I(M))(-1) \xrightarrow{xt} H^i_\M(\R_I(M))$$
has finite length cokernel for all $i \leq d$.

As $I \subseteq \m^a$ we get that $x\sHom_A(M,M) = 0$. So the multiplication map $\mu_x \colon M \rt M$ factors through a free $A$-module $F$.
\[
\xymatrix{
\
&F
\ar@{->}[dr]^{\beta}
 \\
M
\ar@{->}[ur]_{\alpha}
\ar@{->}[rr]_{\mu_x}
&\
&M
}
\]
It follows that we have a factorization of the multiplication map by $xt^0$ on $\R_I(M)$ as
\[
\xymatrix{
\
&\R_I(F)
\ar@{->}[dr]^{\beta^*}
 \\
\R_I(M)
\ar@{->}[ur]_{\alpha^*}
\ar@{->}[rr]_{\mu_{xt^0}}
&\
&\R_I(M)
}
\]
So for all $i< r+1$ and for all $n \ll 0$ the multiplication map by $x$ on $H^i_\M(\R_I(M))_n$ is zero.
We have a commutative diagram
\[
\xymatrix{
\
&\R_I(M)(+1)
\ar@{->}[dr]^{t^{-1}}
 \\
\R_I(M)
\ar@{->}[ur]_{xt^{1}}
\ar@{->}[rr]_{\mu_{x}}
&\
&\R_I(M)
}
\]

This induces  a commutative diagram in cohomology
\[
\xymatrix{
\
&H^i_\M(\R_I(M))_{n+1}
\ar@{->}[dr]^{t^{-1}}
 \\
H^i_\M(\R_I(M))_{n}
\ar@{->}[ur]_{xt^{1}}
\ar@{->}[rr]_{\mu_{x}}
&\
&H^i_\M(\R_I(M))_{n}
}
\]
By construction of $x$ the top-left map above is surjective for all $n \ll 0$. It follows the top-right map (multiplication by $t^{-1}$) is zero on all components $H^i_\M(\R_I(M))_{n +1}$ for $n \ll 0$ (with $i < r +1$).

We prove $BH^i_\M(G_I(M))_n = 0$ for $n \ll 0$ and all $i < r$.
Note for $i = 0$ we have $BH^0_M(G_I(M))_n \subseteq H^0_\M(G_I(M))_n = 0$ for $n < 0$.
Now assume $i > 0$. We set $H^i(-) = H^i_\M( -)$ on $\R_I(A)$-modules. Set $G = G_I(M), \R =  \R_I(M)$. Note we have exact sequence
\[
H^{i}(\R)_n \xrightarrow{\gamma_n^{i}} H^i(G)_n \xrightarrow{\delta^i_n} H^{i+1}(\R)_{n+1} \xrightarrow{t^{-1}} H^{i + 1}(\R)_n  \xrightarrow{\gamma_n^{i+1}} H^{i+1}(G)_n
\]
Recall $\beta^i_n = \gamma^{i+1}_{n+1} \circ \delta^i_n$.  We also have the map induced by $t^{-1}$ is zero for $n \ll 0$.

We first consider the case when $i = 1$. Then note that $H^1(\R)_n = 0$ for $n < 0$ (see \ref{L}).  So $\delta^1_n$ is injective for $n < 0$. We also have $\gamma_n^2$ is injective for $n \ll 0$. So $\beta^1_n $ is injective for all $n \ll 0$. So $BH^1(G)_n = 0$ for $n \ll 0$.

Now assume $2 \leq i < r$.
Assume  the map  $H^{j+1}(\R)_{n+1} \xrightarrow{t^{-1}} H^{j + 1}(\R)_n$ is zero for all $n \leq n_0$ and for $j = i, i+1 $. So $\gamma^{i+1}_n$ is injective  for all $n \leq n_0$.
We also have $\delta^j_n $ is surjective for all $n \leq n_0$ and $j = i-1, i$.
Fix $n \leq n_0 - 1$. Let $u \in H^i(G)_n$ be such that $\beta^i_n(u) = 0$. As $\gamma^{i+1}_{n+1}$ is injective we have $u \in \ker \delta^i_n = \image \gamma^i_n$.
Say $u = \gamma^i_n(v)$ where $v \in H^i(\R)_n$. However $\delta^{i-1}_{n-1}$ is surjective. So $v = \delta^{i-1}_{n-1}(w)$ for some $w \in H^{i-1}(G)_{n-1}$. It follows that
$u = \beta^{i-1}_{n-1}(w)$. Thus $u$ is a boundary in the Bockstein complex. It follows that $BH^i(G)_n = 0$ for all $n \leq n_0 - 1$.
\end{proof}

\end{document}